\newtheorem{theorem}{Theorem}[section]
\newtheorem{lemma}[theorem]{Lemma}
\newtheorem{proposition}[theorem]{Proposition}
\newtheorem{corollary}[theorem]{Corollary}
\newtheorem{claim}[theorem]{Claim}
\theoremstyle{definition}
\newtheorem{definition}[theorem]{Definition}
\newtheorem{example}[theorem]{Example}
\theoremstyle{remark}
\newtheorem{remark}[theorem]{Remark}
\numberwithin{equation}{section}
\begin{document}
\title[Generalized torsion for knot groups]{Generalized torsion, unique root property and Baumslag--Solitar relation for knot groups}

\author{Keisuke Himeno}
\address{Graduate School of Advanced Science and Engineering , Hiroshima University,
1-3-1 Kagamiyama, Higashi-hiroshima, 7398526, Japan}
\email{m216754@hiroshima-u.ac.jp}

\author{Kimihiko Motegi}
\address{Department of Mathematics, Nihon University, 
3-25-40 Sakurajosui, Setagaya-ku, 
Tokyo 1568550, Japan}
\email{motegi.kimihiko@nihon-u.ac.jp}
\thanks{The second named author has been partially supported by JSPS KAKENHI Grant Number 19K03502 and Joint Research Grant of Institute of Natural Sciences at Nihon University for 2021. }

\author{Masakazu Teragaito}
\address{Department of Mathematics Education, Hiroshima University,
Higashi-hiroshima 7398524, Japan}
\email{teragai@hiroshima-u.ac.jp}
\thanks{The third named author has been partially supported by JSPS KAKENHI Grant Number JP20K03587.}

\subjclass{Primary 57M05; Secondary 57K10, 57M07, 20F19, 20F38, 20F60, 20F65, 06F15}



\keywords{Knot group, unique root property, $R$--group, $\bar{R}$--group, $R^*$--group, bi-ordering, generalized torsion, stable commutator length}

\begin{abstract}
Let $G$ be a group. 
If an equation $x^n = y^n$ in $G$ implies $x = y$ for any elements $x$ and $y$, 
then $G$ is called an $R$--group. 
It is completely understood which knot groups are $R$--groups. 
Fay and Walls introduced $\bar{R}$--group in which the normalizer and the centralizer of an isolator of $\langle x \rangle$ coincide for any non-trivial element $x$. 
It is known that $\bar{R}$--groups and $R$--groups share many interesting properties and $\bar{R}$--groups are necessarily $R$--groups. 
However, in general, the converse does not hold. 
We will prove that these classes are the same for knot groups. 
In the course of the proof, we will determine knot groups with generalized torsion of order two.
\end{abstract}

\maketitle

\section{Introduction}\label{sec:intro}

Let $G$ be a group. 
Then the most elementary equation $x^n = y^n$ leads us to the notion of $R$--groups (\cite{B,K}).

\begin{definition}[$R$--group]
A group $G$ is called an \textit{$R$--group} if it has the \textit{unique root property\/}:
an equation $x^n=y^n$ for some non-zero integer $n$ in $G$ implies $x=y$.
\end{definition}

$R$--groups form an important class of torsion-free groups, and some relation to abstract commensurators is studied in \cite{BB}.
It is known that any torsion-free word-hyperbolic group is an $R$--group (for example, see \cite[Lemma 2.2]{BB}).

It is easy to observe that the knot group of a torus knot is not an $R$--group.
Let $G(K)$ be the knot group of a torus knot $K=T(p,q)$.
Then $G(K)$ has a presentation $\langle a, b \mid a^p=b^q\rangle$.
Thus the equation $x^p=a^p$ has solutions $x=a$, $(ba)^{-1}a(ba)$, $(ba)^{-2}a(ba)^2$, and so on.

In 1974, Murasugi \cite{M} gave a sufficient condition for the knot group of a fibered knot
to be an $R$--group.
In particular, he showed that the knot group of the figure-eight knot is an $R$--group.
We should remark that Murasugi's work locates before the works of Jaco--Shalen \cite{JS}, Johannson \cite{J}.

A complete characterization of knot groups which are $R$--groups
is contained in \cite[Proposition 32.4]{J}.
(Johannson discusses the fundamental groups of Haken manifolds, more generally.)
It also follows from \cite{JS}, although it is not explicitly stated.

\begin{theorem}[\cite{JS,J}]\label{thm:main}
Let $K$ be a knot in the $3$--sphere $S^3$, and let $E(K)$ be the exterior and $G(K)=\pi_1(E(K))$.
Then $G(K)$ is an $R$--group if and only if
$E(K)$ contains 
neither a torus knot space nor a cable space as a decomposing piece 
of the torus decomposition.
In particular, the knot group of any hyperbolic knot is an $R$--group.
\end{theorem}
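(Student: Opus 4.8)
The plan is to work with the torus (JSJ) decomposition of $E(K)$, which expresses $G(K)$ as the fundamental group of a graph of groups whose vertex groups are the $\pi_1$ of the geometric pieces and whose edge groups are the rank-two free abelian groups carried by the decomposing tori; since these tori are incompressible, all edge and vertex homomorphisms are injective, and $G(K)$ acts without inversions on the associated Bass--Serre tree $T$ with these stabilizers. By the structure theory of \cite{JS}, the Seifert fibered pieces of a knot exterior are exactly torus knot spaces, cable spaces, and composing spaces (that is, products $P\times S^1$ over a planar surface $P$, with no exceptional fiber); every other piece is hyperbolic. I would reduce the theorem to two algebraic properties of this graph of groups, each governed precisely by the excluded pieces: (P1) every vertex group is an $R$-group, and (P2) every edge group is \emph{root-closed} (isolated) in each adjacent vertex group, meaning $w^m\in G_e$ with $w\in G_v$ forces $w\in G_e$.

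For necessity I would produce an explicit failure of unique roots inside an excluded piece and push it up by injectivity. In a torus knot space, $\pi_1=\langle a,b\mid a^p=b^q\rangle$ with $p,q\ge 2$ and central regular fiber $h=a^p$; then for any $t$ one has $(t^{-1}at)^p=t^{-1}a^p t=t^{-1}ht=h=a^p$, and taking $t=ba$ gives $t^{-1}at\ne a$ (otherwise $ba=ab$, forcing the group abelian), so $h$ has two distinct $p$-th roots. The same computation works in a cable space, where the regular fiber $h$ is central and the exceptional fiber $c$ satisfies $c^p=h^{\beta}$ with $p\ge 2$: for a suitable $t$ not centralizing $c$, the elements $c$ and $t^{-1}ct$ are distinct $p$-th roots of $h^{\beta}$. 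In both cases the offending element lies in the peripheral $\mathbf{Z}^2$ while its root does not, so (P2) fails; since the piece group injects into $G(K)$ the two roots remain distinct there, and hence $G(K)$ is not an $R$-group.

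For sufficiency I would first verify (P1) and (P2) for the allowed pieces. A composing space has $\pi_1\cong F\times\mathbf{Z}$ with $F$ free; from $(f,a)^n=(f^n,na)$ together with the unique root property of free groups one gets both that $F\times\mathbf{Z}$ is an $R$-group and that each boundary subgroup $\langle\gamma_i\rangle\times\mathbf{Z}$ is root-closed. For a hyperbolic piece I would use the discrete torsion-free representation $\pi_1\hookrightarrow\mathrm{PSL}_2(\mathbf{C})$: if $x^n=y^n=g\ne 1$, then $x,y$ centralize $g$ and preserve its fixed-point set on $\partial\mathbf{H}^3$; when $g$ is loxodromic its centralizer is infinite cyclic, and when $g$ is parabolic both $x,y$ lie in the abelian cusp subgroup, so in either case $x=y$, proving (P1), and the same fixed-point argument shows each peripheral $\mathbf{Z}^2$ is root-closed, giving (P2). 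This is exactly where the exclusion of torus knot spaces and cable spaces is used: they are the only pieces in which a peripheral element acquires an external root.

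Finally I would run the Bass--Serre argument on $T$. Given $x^n=y^n=g$, homogeneity of translation length shows $x,y$ are elliptic or hyperbolic together with $g$. If $g$ is elliptic I would locate $x,y$ in a single vertex group: when $\mathrm{Fix}(g)$ is a vertex this is immediate, and when $\mathrm{Fix}(g)$ meets an edge so that $g\in G_e$, property (P2) forces $x,y\in G_e$; either way (P1) or the abelian edge group yields $x=y$. If $g$ is hyperbolic then $x,y$ lie in the centralizer $C_G(g)$, which stabilizes the axis $L=\mathrm{Axis}(g)$; writing the orientation-preserving axis stabilizer as an extension of an infinite cyclic translation group by the pointwise stabilizer $\bigcap_{e\subset L}G_e$, orientability together with (P2) forces $C_G(g)$ to be abelian of rank one or two, and equality of translation lengths then gives $x=y$. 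The main obstacle, and the real content of the Jaco--Shalen/Johannson analysis \cite{JS,J}, is establishing (P2) and the abelianness of these axis stabilizers in general---equivalently, proving that the characteristic (Seifert) submanifold contributes a peripheral element with an external root if and only if it contains a torus knot space or a cable space; once this structural input is in hand, the tree dichotomy above closes the argument.
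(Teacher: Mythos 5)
The paper does not prove Theorem~\ref{thm:main} at all: it is quoted from Johannson's Proposition~32.4 and from Jaco--Shalen, so there is no in-paper argument to compare yours against. Your Bass--Serre outline is the standard modern route to this statement and is essentially correct in structure: the necessity direction (exhibiting two distinct $p$-th roots of the central regular fiber inside a torus knot space or cable space piece and pushing them up by injectivity of the vertex groups) is complete and matches the computation the paper itself records in the introduction. For sufficiency, your reduction to (P1) and (P2) plus the tree dichotomy is the right skeleton, and your verifications of (P1)/(P2) for hyperbolic pieces (elementary centralizers, malnormal cusp subgroups) and for composing spaces ($F\times\mathbb{Z}$ with $F$ free and the boundary classes primitive) are correct.

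Two steps are asserted rather than argued and deserve care. In the elliptic case, $x$ and $y$ need not fix a common vertex of $\mathrm{Fix}(g)$ a priori; you should take the geodesic between $\mathrm{Fix}(x)$ and $\mathrm{Fix}(y)$ inside $\mathrm{Fix}(g)$ and use (P2) on its first edge to force $\mathrm{Fix}(x)\cap\mathrm{Fix}(y)\ne\emptyset$, after which (P1) applies. In the hyperbolic case, ``$C_G(g)$ is abelian'' really needs the pointwise stabilizer $N=\bigcap_{e\subset L}G_e$ of the axis to be trivial; this follows because any axis must pass through a vertex carrying a hyperbolic piece with two distinct incident axis-edges (a knot exterior has at most one composing space, sitting at the boundary, so axes cannot run through composing vertices only), and two distinct peripheral $\mathbb{Z}^2$'s of a cusped hyperbolic piece intersect trivially; then $xy^{-1}\in N=1$. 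Finally, be aware that the two structural inputs you flag at the end --- the classification of the Seifert pieces of a knot exterior as torus knot spaces, cable spaces and composing spaces, and the fact that only the first two produce a peripheral element with an external root --- are exactly the content of \cite{JS,J} that the paper is citing, so your proposal is best read as a reduction of the theorem to those references rather than an independent proof; that is consistent with how the paper treats the statement.
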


\medskip

In 1999,
Fay and Walls \cite{FW} introduced $\bar{R}$--groups, 
which share many interesting properties with $R$--groups. 

\begin{definition}[$\bar{R}$--group]
A group $G$ is called an \textit{$\bar{R}$--group\/}
if $G$ is torsion-free and the normalizer and the centralizer of the isolator subset
\[
I\langle x\rangle = \{g\in G\mid g^n\in \langle x \rangle\ \text{for some positive integer $n$}\}
\]
 of the cyclic group $\langle x\rangle$ coincide for any $x\in G$.
\end{definition}

It is known that any $\bar{R}$--group is an $R$--group \cite{FW}; see also Lemma~\ref{Rbar_R}. 
However, in general, 
there exist $R$--groups which are not $\bar{R}$--groups (\cite{FW}).
Our main result in this paper asserts that this is not the case among knot groups.

\begin{theorem}
\label{thm:RRbar}
For knot groups, two classes of $R$--groups and $\bar{R}$--groups coincide.
\end{theorem}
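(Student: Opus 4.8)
The plan is to use Theorem~\ref{thm:main} to reduce the statement to a single implication. Lemma~\ref{Rbar_R} already gives that every $\bar{R}$--group is an $R$--group, so it suffices to show that every knot group which is an $R$--group is an $\bar{R}$--group. I will prove the contrapositive in the sharper form: if $G(K)$ is not an $\bar{R}$--group, then its torus decomposition contains a torus knot space or a cable space, and hence $G(K)$ is not an $R$--group by Theorem~\ref{thm:main}. Since knot groups are torsion-free, failure of the $\bar{R}$--condition yields a non-trivial $x$ and an element $g\in N(I\langle x\rangle)\setminus C(I\langle x\rangle)$. Replacing $x$ by an element of $I\langle x\rangle$ that $g$ does not centralize leaves the isolator unchanged (and $g$ still normalizes it), so I may assume $gxg^{-1}\neq x$. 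As $g$ normalizes $I\langle x\rangle$, the element $gxg^{-1}$ lies in $I\langle x\rangle$, so some positive power of it lies in $\langle x\rangle$, producing a Baumslag--Solitar relation
\[
g\,x^{n}\,g^{-1}=x^{m}\qquad(n,m\neq 0).
\]

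The first structural input is that a knot group, being the fundamental group of an aspherical $3$--manifold, contains no Baumslag--Solitar subgroup $BS(n,m)$ with $|n|\neq|m|$: the solvable groups $BS(1,k)$ with $|k|\ge 2$ are not polycyclic and the other non-Euclidean $BS(n,m)$ are excluded as well, this being an algebraic consequence of the Torus Theorem and geometrization. Hence $|m|=|n|$, and there are two cases. If $m=n$, then $x$ and $gxg^{-1}$ are distinct $n$-th roots of the common element $x^{n}=g x^{n}g^{-1}$, so the unique root property already fails and $G(K)$ is not an $R$--group. The essential case is $m=-n$, i.e.\ $g x^{n}g^{-1}=x^{-n}$; writing $y=x^{n}\neq 1$, the element $g$ \emph{inverts} $y$, so $y$ is a generalized torsion element of order two.

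It remains to show that such a clean inversion forces a Seifert piece. From $gyg^{-1}=y^{-1}$ one gets $g^{2}yg^{-2}=y$, so $g^{2}$ centralizes $y$; a short torsion-free computation shows that $\langle y,g^{2}\rangle$ is free abelian of rank two and that $\langle y,g\rangle$ is a Klein bottle group (torsion-free, non-abelian, and containing $\mathbf{Z}^{2}$ with index two). Thus $G(K)$ contains a Klein bottle subgroup $\Pi$, and $\Pi$ normalizes its canonical $\mathbf{Z}^{2}=\langle y,g^{2}\rangle$ with $g$ acting as $-1$ on the $y$--factor. By the torus decomposition this $\mathbf{Z}^{2}$ is conjugate either into a Seifert fibered piece or into the peripheral structure of a hyperbolic piece. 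In a cusped hyperbolic piece every peripheral $\mathbf{Z}^{2}$ is self-normalizing and the group is torsion-free, so nothing can normalize it acting as $-1$; the same malnormality, together with the acylindricity of the torus decomposition, excludes the edge-group case. Hence $\Pi$, and with it the inversion, is supported on a Seifert fibered piece, and the Seifert fibered pieces of a knot exterior are torus knot spaces and cable spaces. By Theorem~\ref{thm:main}, $G(K)$ is not an $R$--group, completing the contrapositive.

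The hardest point is exactly this last passage from the inversion $gyg^{-1}=y^{-1}$ to a Seifert piece: one must rule out a Klein bottle subgroup hiding in the hyperbolic part or straddling the gluing tori, which is where malnormality of cusp subgroups and the acylindrical nature of the torus decomposition are essential, and where the determination of knot groups carrying generalized torsion of order two enters to pin the inversion down to a torus knot space or a cable space. The complementary verification---that torus knot spaces and cable spaces do carry generalized torsion of order two, making the determination an equivalence---is a direct construction from the Seifert fibering and the torsion of the base orbifold, and is not required for the implication above.
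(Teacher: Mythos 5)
Your overall strategy is the same as the paper's: one direction is Lemma~\ref{Rbar_R}; for the other you extract a Baumslag--Solitar relation $gx^ng^{-1}=x^m$ from the failure of the $\bar{R}$--condition (the paper instead quotes the Fay--Walls characterization directly, but your re-derivation from the normalizer/centralizer definition is fine), invoke the $3$--manifold restriction $|m|=|n|$ (this is Shalen's theorem about the \emph{relation}, not about subgroups isomorphic to $BS(n,m)$ --- your phrasing conflates the two, but the fact you need is exactly Lemma~\ref{lem:JS-bs}), dispose of $m=n$ by exhibiting two $n$-th roots, and in the case $m=-n$ produce an element conjugate to its inverse, hence a Klein bottle subgroup, which you then try to push into a Seifert piece of the torus decomposition. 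This is precisely the paper's route via Lemma~\ref{lem:kbmap} and Proposition~\ref{prop:g-torsion2}.

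There are two genuine problems in your last step. First, your claim that ``the Seifert fibered pieces of a knot exterior are torus knot spaces and cable spaces'' is false: composing spaces (and, in Johannson's convention, $(\mathrm{torus})\times I$) also occur, and a composite knot whose JSJ pieces are all hyperbolic except for a composing space \emph{does} have an $R$--group by Theorem~\ref{thm:main}. So if your Klein bottle group could sit inside a composing space, the contrapositive would fail. You must rule this out separately; the paper does so by observing that $\pi_1$ of a composing space (a free group times $\mathbb{Z}$) and of $(\mathrm{torus})\times I$ are bi-orderable, hence contain no element conjugate to its inverse. Second, the passage from ``$\langle y,g\rangle$ is a Klein bottle group'' to ``$\langle y,g\rangle$ is conjugate into a single JSJ piece'' is the technical heart of the argument, and you only gesture at it (``malnormality \ldots together with the acylindricity of the torus decomposition, excludes the edge-group case''). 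Knowing that the $\mathbb{Z}^2=\langle y,g^2\rangle$ is conjugate into a piece or an edge group does not by itself place the inverting element $g$ in the same piece; one needs an enclosing theorem for the whole singular Klein bottle. The paper handles this by citing Johannson's Corollary 13.2, which homotopes the essential map of the Klein bottle into the characteristic submanifold. Your Bass--Serre sketch could likely be made rigorous, but as written it asserts rather than proves the key exclusion.
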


In the proof of Theorem~\ref{thm:RRbar}, 
we use a characterization of $\bar{R}$--groups using Baumslag-Solitar relation. 
See Section \ref{sec:Rbar} for the characterization.
This characterization enables us to relate a knot group $G(K)$ being an $\bar{R}$--group and 
being $R$--group using generalized torsion elements defined below. 

In $G$, a non-trivial element $g$ is called a \textit{generalized torsion element\/}
if some non-empty finite product of its conjugates yields the identity.
That is, the equation
\begin{equation}\label{eq:g-torsion}
g^{a_1}g^{a_2}\dots g^{a_n}=1
\end{equation}
holds for some $a_1,a_2,\dots a_n\in G$ and $n\ge 2$, where $g^a=a^{-1}ga$.
The minimum number of conjugates yielding the identity is called the \textit{order\/} of $g$ (\cite{IMT0}).
Since a generalized torsion element is not the identity, its order is at least two.
A typical example is the fundamental group of the Klein bottle.
It has a presentation $\langle a,b \mid a^{-1}ba=b^{-1}\rangle$.
The relation shows $b^a b=1$, so the generator $b$ is a generalized torsion element of order two.

As a generalization of torsion-free groups, 
Fuchs \cite{Fuchs} introduced $R^*$--groups. 

\begin{definition}[$R^*$--group]
A group $G$ is called an \textit{$R^*$--group\/}
if it has no generalized torsion.
\end{definition}

\medskip

Now it should be worth noting some relation among ordering of group, generalized torsion and $R$--groups. 
Recall that a \textit{bi-ordering\/} in a group $G$ is a strict total ordering $<$
which is invariant under left and right multiplications, that is,
\[
x<y \Longrightarrow gxh<gyh  \quad  \text{for $g,h\in G$}.
\]
If $G$ admits a bi-ordering, then it is said to be \textit{bi-orderable}.
Bi-orderable groups are $R$--groups (see \cite{CR}), but
the converse is not true (\cite[p.127]{MR}). 

If we require only the invariance under left multiplication, then
$G$ is said to be \textit{left-orderable}.
It is well known that all knot groups are left-orderable \cite{BRW,HS}.
In \cite{N}, Neuwirth asked if a knot group can be bi-orderable. 
Perron and Rolfsen \cite{PR} gave a sufficient condition for the knot
group of a fibered knot to be bi-orderable, and showed that
the group of the figure-eight knot is bi-orderable.
Since then, there are various results on bi-orderable knot groups (\cite{CGW,CDN, CR,CR2,I,I2,KR,Y}),
but there seems to be no characterization of them.

It is well known that bi-orderable groups are $R^*$--groups, but
there are $R^*$--groups which are not bi-orderable (\cite{Bl,BL,MR}). 
Also, it is not known whether any $R^*$--group is left-orderable or not.

For $3$--manifold groups, including knot groups, we conjecture that two classes of  $R^*$--groups and bi-orderable groups
coincide \cite{MT}.
Although this conjecture is verified for various knot groups (\cite{HT,MT2}), it still remains to be open, in general.

For the relation between $R^*$--groups and $\bar{R}$--groups,
it is easy to show that if a knot group is an $R^*$--group then it is an $\bar{R}$--group (Lemma \ref{lem:R*Rbar}), and
we can state the following from precedent works \cite{HT,MT,T}.

\begin{theorem}\label{thm:R*Rbar}
There exist infinitely many hyperbolic knots whose knot groups are not $R^*$--groups but $\bar{R}$--groups.
\end{theorem}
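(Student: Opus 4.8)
The plan is to decouple the two required properties: being an $\bar{R}$-group will follow formally from the results already in hand, so that the entire content of the theorem is concentrated in producing generalized torsion inside infinitely many hyperbolic knot groups. For the first, I would observe that \emph{every} hyperbolic knot group is automatically an $\bar{R}$-group: if $K$ is hyperbolic, then $E(K)$ is atoroidal and its torus decomposition is trivial, consisting of $E(K)$ alone, so it contains neither a torus knot space nor a cable space as a decomposing piece. Theorem~\ref{thm:main} then gives that $G(K)$ is an $R$-group, and Theorem~\ref{thm:RRbar} upgrades this to $G(K)$ being an $\bar{R}$-group. Hence the $\bar{R}$-property is free for all hyperbolic knots, and the theorem reduces to exhibiting infinitely many hyperbolic knots whose groups fail to be $R^*$-groups, i.e.\ contain a generalized torsion element satisfying a relation of the form \eqref{eq:g-torsion}.

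For the second part I would assemble such a family from the precedent works \cite{HT,MT,T}. The relevant point from those papers is that generalized torsion, far from being obstructed by hyperbolicity, already occurs in explicit hyperbolic examples---most conveniently in an infinite subfamily of twist knots (the figure-eight knot must be excluded, since its group is bi-orderable and hence an $R^*$-group). For such a family I would (i) confirm hyperbolicity, which holds for all but finitely many twist knots, and (ii) exhibit the generalized torsion relation uniformly across the family, as carried out in the cited references. Each member is then simultaneously hyperbolic, an $\bar{R}$-group by the first step, and not an $R^*$-group by the exhibited relation, which proves the theorem.

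The main obstacle lies entirely in the second step: certifying that a genuinely infinite collection of \emph{hyperbolic} knots carries generalized torsion. This is delicate because the absence of generalized torsion is tightly linked to bi-orderability (cf.\ the conjecture around \cite{MT}), and hyperbolic knot groups are precisely the case where such elements are hardest to locate; the cited works supply them through explicit word computations, and the remaining care is in propagating a single generalized torsion relation across an infinite hyperbolic family while retaining control of hyperbolicity.
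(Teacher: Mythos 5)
Your proposal is correct and follows essentially the same route as the paper: the $\bar{R}$--property for all hyperbolic knot groups is deduced from Theorem~\ref{thm:main} combined with Theorem~\ref{thm:RRbar}, and the failure of the $R^*$--property for infinitely many hyperbolic knots is delegated to the explicit generalized torsion elements constructed in the cited works (negative twist knots and twisted torus knots). Your added remarks on excluding the figure-eight knot and on verifying hyperbolicity are consistent with, though not spelled out in, the paper's brief proof.
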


%

Suppose that $G(K)$ is not an $R^*$--group, 
namely it has a generalized torsion element. 
In the course of the proof of Theorem~\ref{thm:RRbar}, 
we will prove Theorem~\ref{thm:g-torsion_order_2} below which determines knot groups with generalized torsion elements of order two. 
Before stating the result, we need a few definitions. 

A torus knot space is said to be of \textit{even type\/} if it is the exterior
of a torus knot $T(p,q)$ with $p$ or $q$ even.
It is of \textit{odd type\/} otherwise.
Similarly, a cable space of \textit{even type\/} is defined as the exterior of $T(p,q)$ with $p$ even,
which lies  on a torus $S^1\times \partial D_0$ and runs $p$ times along $S^1$ in a solid torus $S^1\times D^2$, where $D_0\subset D$ is a smaller disk.
Also, it is of \textit{odd type\/} otherwise.

\begin{theorem}
\label{thm:g-torsion_order_2}
Let $G(K)$ be the knot group of a knot $K$.
Then $G(K)$ has a generalized torsion element of order two if and only if 
$E(K)$ contains either a torus knot space of even type or a cable space of even type
as a decomposing piece of the torus decomposition. 
In particular, such a knot group is not  an $R$--group.  
\end{theorem}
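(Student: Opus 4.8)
The plan is to first recast the hypothesis algebraically. A nontrivial element $g$ has order two as a generalized torsion element precisely when $g$ is conjugate to $g^{-1}$: conjugating the defining relation $g^{a_1}g^{a_2}=1$ by $a_1^{-1}$ we may assume $a_1=1$, so that $g\,(c^{-1}gc)=1$ with $c=a_2a_1^{-1}$, i.e. $c^{-1}gc=g^{-1}$; conversely this relation yields the length--two product $g\cdot g^{c}=1$, and length one is impossible since $g\ne 1$. Thus the theorem is equivalent to: $G(K)$ contains a nontrivial element satisfying the Klein--bottle (Baumslag--Solitar $BS(1,-1)$) relation $c^{-1}gc=g^{-1}$ if and only if the torus decomposition contains an even-type torus knot space or an even-type cable space. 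The final ``in particular'' is then immediate, either from Theorem~\ref{thm:main} (an even-type piece is in particular a torus knot space or a cable space), or directly: from $c^{-1}gc=g^{-1}$ one also gets $cgc^{-1}=g^{-1}$, whence $(gc)^2=g\,(cgc^{-1})\,c^2=c^2$ while $gc\ne c$, contradicting the unique root property.

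\textbf{The ``if'' direction.} Here I would build the element explicitly inside a single even-type piece and then push it into $G(K)$. Consider an even-type torus knot space with group $G=\langle a,b\mid a^p=b^q\rangle$, $p$ even, set $h=a^p=b^q$ (central), and pass to $Q=G/\langle h\rangle\cong \mathbf{Z}/p * \mathbf{Z}/q$. Then $t=\bar a^{\,p/2}$ is an involution, and $\bar g=t\,(\bar b\,t\,\bar b^{-1})$ is a product of two noncommuting involutions, hence of infinite order with $t\bar g t=\bar g^{-1}$. Lifting by $s=a^{\,p/2}$ (so $s^2=h$) and $g=sbsb^{-1}$, a short computation gives the relation $sgs^{-1}=h^{2}g^{-1}$; the central defect has \emph{even} exponent, so replacing $g$ by $g'=gh^{-1}$ yields $sg's^{-1}=g'^{-1}$ with $g'\ne 1$, exhibiting generalized torsion of order two. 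The even-type cable space is handled identically, using the involution coming from the order-$p$ cone point of its base orbifold group $\mathbf{Z}*\mathbf{Z}/p$ (the central defect $h^{2e}$ is again even). Finally, since the decomposing tori are incompressible, each piece group injects into $G(K)$ by Bass--Serre theory, so $g'$ stays nontrivial and $g'\sim g'^{-1}$ persists in $G(K)$.

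\textbf{The ``only if'' direction.} Assume no even-type piece occurs. The pieces are then hyperbolic, odd-type torus knot or cable spaces, or composing spaces. The per-piece step rests on an orbifold reciprocity lemma: the base-orbifold group of every remaining Seifert piece is a free product of a free group with \emph{odd} cyclic groups, hence is $2$--torsion free, and a normal-form argument shows that no nontrivial element of such a free product is conjugate to its inverse; lifting through the central extension $1\to\langle h\rangle\to\pi_1\to Q\to 1$ transfers this to the piece group. For a hyperbolic piece one uses that centralizers of loxodromic elements are infinite cyclic and that cusp subgroups are malnormal, which forbids $g\sim g^{-1}$ there. To globalize, I would analyze a putative reciprocal element through the Bass--Serre tree of the JSJ graph of groups: an elliptic such element is conjugate into a vertex group and is excluded by the per-piece analysis, while a reciprocal element of positive translation length forces the conjugator to reverse its axis.

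\textbf{Main obstacle.} The hard part is exactly this last, hyperbolic-in-the-tree case. After removing edge inversions by subdividing, an axis-reversing conjugator must fix a vertex and interchange the two incident edge ($\mathbf{Z}^2$) groups, thereby inducing a symmetry of the corresponding vertex piece that inverts a peripheral direction; I expect the crux to be proving that such a symmetry can be realized only by an even-type Seifert piece, so that its presence contradicts the standing assumption. Establishing the orbifold reciprocity lemma in the precise generality needed, and controlling how peripheral reciprocal behaviour propagates across the gluing tori, are the technical heart of the argument.
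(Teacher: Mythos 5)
Your reduction ($g$ has order two as a generalized torsion element iff $g$ is conjugate to $g^{-1}$), your direct argument for the ``in particular'' clause, and your ``if'' direction are all correct and essentially match the paper: the paper's Example~\ref{ex:g2} produces the element $[a^{p/2},b]$ in an even-type torus knot or cable space and verifies nontriviality in the quotient free product, exactly as you do (your $g'$ is a variant of the same commutator), and injectivity of the piece groups then transports it into $G(K)$. Your per-piece analysis for the ``only if'' direction is also sound and parallels the paper's Lemma~\ref{lem:tc}, which rules out odd-type Seifert pieces via Chen's scl spectral gap in free products rather than by normal forms; both routes work, and your observation that a central $g$ with $g\sim g^{-1}$ is trivial by torsion-freeness is if anything cleaner than the paper's homological computation.

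The genuine gap is the globalization step, and it is exactly where you say it is. An element $g$ with $c^{-1}gc=g^{-1}$ need not be conjugate into a JSJ vertex group, and when $g$ acts hyperbolically on the Bass--Serre tree you are left having to show that an axis-reversing conjugator fixing a vertex $v$ and swapping the two incident edge groups forces an even-type piece at $v$. You flag this as ``the main obstacle'' and ``the technical heart,'' but you do not prove it, so the ``only if'' direction is incomplete as written; it would require a nontrivial acylindricity-type statement about how peripheral $\mathbb{Z}^2$ subgroups of odd-type Seifert pieces and hyperbolic pieces can be permuted and inverted. The paper avoids this tree analysis entirely: from $c^{-1}gc=g^{-1}$ it builds a $\pi_1$-injective singular Klein bottle $f\colon F\to E(K)$ (Lemma~\ref{lem:kbmap}, using that every element of $\pi_1(F)$ has the form $a^ib^j$), then invokes Johannson's Enclosing Theorem (Corollary 13.2 of \cite{J}) to homotope $f$ into a single component of the characteristic submanifold, which localizes $g$ into one Seifert piece where the per-piece analysis applies. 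If you want to complete your approach you either need to carry out the axis-reversal analysis in full, or substitute this topological enclosing argument for it.
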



\begin{corollary}\label{cor:hyp2}
The knot group of a hyperbolic knot does not admit a generalized torsion element of order two.
\end{corollary}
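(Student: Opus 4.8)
The plan is to deduce the statement directly from Theorem~\ref{thm:g-torsion_order_2}, so the only real work is to identify the torus decomposition of the exterior of a hyperbolic knot. First I would recall that, by definition, a knot $K$ is \emph{hyperbolic} precisely when its exterior $E(K)$ admits a complete hyperbolic structure of finite volume. By Thurston's hyperbolization together with the uniqueness of the torus decomposition of Jaco--Shalen \cite{JS} and Johannson \cite{J}, such an exterior is atoroidal: it contains no essential torus, and hence its torus decomposition is trivial, consisting of the single piece $E(K)$ itself.

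Next I would observe that this single piece is hyperbolic, and in particular is not Seifert fibered. Since both a torus knot space and a cable space are Seifert fibered spaces, neither can occur as a decomposing piece of the (trivial) torus decomposition of $E(K)$. Consequently $E(K)$ contains neither a torus knot space of even type nor a cable space of even type as a decomposing piece.

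Finally, reading Theorem~\ref{thm:g-torsion_order_2} in the contrapositive direction, the absence of such decomposing pieces forces $G(K)$ to have no generalized torsion element of order two, which is exactly the assertion. The argument carries essentially no obstacle beyond correctly invoking the triviality of the torus decomposition for hyperbolic knot exteriors; all of the genuine difficulty is concentrated in Theorem~\ref{thm:g-torsion_order_2}, on which this corollary rests.
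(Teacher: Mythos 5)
Your argument is correct, and the paper itself endorses it: immediately after stating the corollary, the authors remark that it ``is a direct consequence of Theorem~\ref{thm:g-torsion_order_2}.'' The exterior of a hyperbolic knot is atoroidal, so its torus decomposition consists of the single piece $E(K)$, which is hyperbolic and hence neither a torus knot space nor a cable space; the ``only if'' direction of Theorem~\ref{thm:g-torsion_order_2} (really Proposition~\ref{prop:g-torsion2}) then gives the conclusion, and there is no circularity since that proposition does not depend on the corollary. However, the proof the paper actually writes out in Section~\ref{sec:scl} is deliberately different: it avoids Theorem~\ref{thm:g-torsion_order_2} entirely and argues via stable commutator length. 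There, one uses that an order-two generalized torsion element $g$ satisfies $\mathrm{scl}_{G(K)}(g)=0$ (Lemma~\ref{lem:imt}); if $g$ is not peripheral, a spectral gap theorem for hyperbolic Dehn fillings \cite{IMT} together with monotonicity of $\mathrm{scl}$ forces $\mathrm{scl}_{G(K)}(g)>0$, a contradiction, while if $g$ is peripheral it must be conjugate to a power of the longitude $\lambda$, and Calegari's computation $\mathrm{scl}_{G(K)}(\lambda)=g(K)-\tfrac12\ge\tfrac12$ rules that out too. Your route is shorter and rests entirely on the JSJ machinery already needed for the theorem; the paper's route buys an independent, quantitative argument of separate interest (and connects to the spectral gap results of Chen--Heuer). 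Both are valid proofs of the statement.
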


We remark that Corollary \ref{cor:hyp2} is a direct consequence of Theorem \ref{thm:g-torsion_order_2}, but
we also 
give an alternative proof using stable commutator length for an independent interest. 

In the forthcoming paper \cite{HMT}, we will 
classify generalized torsion elements of order two in $3$--manifold groups.

\section{$\bar{R}$--groups and $R^*$--groups}\label{sec:Rbar}

Let us recall a characterization of $\bar{R}$--groups given in \cite{FW}.
Theorem 3.2 of \cite{FW} claims that
$G$ is an $\bar{R}$--group if and only if
$G$ is an $R$--group and
for every $x$, $y$ with $y\ne 1$ of $G$ and $m,n\in \mathbb{Z}$,
the Baumslag--Solitar relation
$x^{-1}y^mx=y^n$ implies $m=n$.
Throughout  the paper, we use this description.

\begin{lemma}[\cite{FW}]
\label{Rbar_R}
If a group $G$ is an $\bar{R}$--group, then $G$ is an $R$--group.
\end{lemma}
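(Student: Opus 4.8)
The plan is to verify the unique root property directly from the defining condition that the normalizer $N(\cdot)$ and centralizer $C(\cdot)$ of each isolator satisfy $N(I\langle x\rangle)=C(I\langle x\rangle)$, keeping the argument self-contained rather than invoking the Baumslag--Solitar characterization. So suppose $x^n=y^n$ for some non-zero integer $n$; replacing $n$ by $-n$ if necessary I may assume $n>0$, and I set $w=x^n=y^n$. If $w=1$, then $x^n=1$ forces $x=1$ because $G$ is torsion-free, and likewise $y=1$, so $x=y$; hence from now on I assume $w\neq 1$. The goal is to show that $x$ and $y$ commute, after which a one-line computation finishes everything.

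First I would record the elementary membership facts: since $x^n=w\in\langle w\rangle$ and $y^n=w\in\langle w\rangle$, both $x$ and $y$ lie in the isolator $I\langle w\rangle$. The key step, and the one I expect to carry the weight of the proof, is to show that $x$ normalizes the subset $I\langle w\rangle$. The single idea that unlocks this is that $w=x^n$ is itself a power of $x$, so $x$ commutes with every power of $w$. Concretely, for any $g\in I\langle w\rangle$ there is an integer $k>0$ and an integer $s$ with $g^k=w^s\in\langle w\rangle$, and then
\[
(x^{-1}gx)^k=x^{-1}g^k x=x^{-1}w^s x=w^s\in\langle w\rangle,
\]
so $x^{-1}gx\in I\langle w\rangle$; running the same computation with $x^{-1}$ in place of $x$ (which also commutes with $w$) gives the reverse inclusion, whence $x\in N(I\langle w\rangle)$.

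At this point the defining property of an $\bar{R}$--group enters, and it enters exactly once: since $N(I\langle w\rangle)=C(I\langle w\rangle)$, I may upgrade ``$x$ normalizes $I\langle w\rangle$'' to ``$x$ centralizes $I\langle w\rangle$,'' i.e. $x$ commutes with every element of $I\langle w\rangle$. As $y\in I\langle w\rangle$, this yields $xy=yx$. Finally, because $x$ and $y$ commute,
\[
(x^{-1}y)^n=x^{-n}y^n=w^{-1}w=1,
\]
and torsion-freeness forces $x^{-1}y=1$, that is $x=y$. This establishes the unique root property, so $G$ is an $R$--group. The only genuine subtlety is the normalizer step; everything surrounding it (the reduction to $w$, the final commuting computation) is formal, and the $\bar{R}$--condition is used solely to pass from normalizing to centralizing.
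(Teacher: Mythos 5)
Your proof is correct and follows essentially the same route as the paper: place $x$ and $y$ in a common isolator, verify a normalizing condition by a power computation, use the $\bar{R}$--condition to upgrade normalizing to centralizing, and finish with torsion-freeness. The only (cosmetic) difference is that the paper works with $I\langle x\rangle$ and shows $y$ normalizes it, whereas you work with $I\langle x^n\rangle$ and show $x$ normalizes it.
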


\begin{proof}
Assume that $x^n=y^n$ for some $n\ne 0$. We may assume $n>0$, and show that $x=y$.

Since $G$ is torsion-free (see Section \ref{sec:intro}), we may assume that $x,y\ne 1$.
Let us consider the isolator subset $I\langle x\rangle$ of the cyclic group $\langle x\rangle$.
Then $y \in I\langle x\rangle$.

We claim that $y$ lies in the normalizer of $I\langle x\rangle$.
If $g\in I\langle x\rangle$, then $g^i=x^j$ for some integers $i>0$ and $j$.
Thus 
\[
(y^{-1}g y)^{in}=y^{-1}g^{in}y=y^{-1}x^{jn}y=y^{-1}y^{jn}y=y^{jn}=x^{jn}\in \langle x\rangle,
\]
so $y^{-1}gy\in I\langle x\rangle$.

By the definition of $\bar{R}$--group, the normalizer and the centralizer of $I\langle x\rangle$ coincide.
Hence $y$ lies in the centralizer of $I\langle x\rangle$.

Clearly, $x\in I\langle x\rangle$, so $x$ and $y$ commute.
This implies 
\[
(xy^{-1})^n=x^ny^{-n}=1.
\]
Again, since $G$ is torsion-free, we have $x=y$.
\end{proof}

Thus, the class of $\bar{R}$--groups is contained in that of $R$--groups.
Proposition 3.11 of \cite{FW} shows that there is a difference between these two classes.
(Certain extensions of a torsion-free abelian group of rank one by $\mathbb{Z}$ are typical examples.)

In this section, we will prove Theorem \ref{thm:R*Rbar}
after discussing the relationship between $R^*$--groups and $\bar{R}$--groups.
Throughout the paper, $[x,y]=x^{-1}y^{-1}xy$.

\begin{lemma}\label{lem:R*R}
If a group $G$ is an $R^*$--group, then $G$ is an $R$--group.
\end{lemma}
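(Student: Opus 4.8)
The plan is to establish the unique root property directly, by showing that a failure of it would manufacture a generalized torsion element. So I would assume $x^n=y^n$ for some non-zero integer $n$ and argue that $x=y$. First I would reduce to the case $n\ge 2$: taking inverses of $x^n=y^n$ gives $x^{-n}=y^{-n}$, so we may take $n>0$, and the case $n=1$ is trivial.

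The heart of the argument is a collection identity. Setting $g=xy^{-1}$, so that $x=gy$, I would expand $(gy)^n$ and push every factor of $y$ to the right. A routine induction on $n$ gives
\[
x^n=\left(\prod_{i=0}^{n-1} y^i g y^{-i}\right)y^n
=\bigl(g\,g^{y^{-1}}\cdots g^{y^{-(n-1)}}\bigr)\,y^n ,
\]
in which each factor $y^i g y^{-i}=g^{y^{-i}}$ is a conjugate of $g$. Substituting $x^n=y^n$ and cancelling $y^n$ on the right then yields
\[
g\,g^{y^{-1}}\cdots g^{y^{-(n-1)}}=1 ,
\]
a non-empty product of exactly $n$ conjugates of $g$.

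To finish, I would observe that if $x\ne y$ then $g=xy^{-1}\ne 1$, and since $n\ge 2$ the displayed equation exhibits $g$ as a generalized torsion element (of order at most $n$). This contradicts the hypothesis that $G$ is an $R^*$--group, which has no generalized torsion. Hence $g=1$, that is $x=y$, so $G$ has the unique root property and is an $R$--group.

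I do not expect a genuine obstacle here: the only real content is the collection identity $x^n=\bigl(\prod_{i=0}^{n-1} y^i g y^{-i}\bigr)y^n$, which follows by a one-line induction, after which the conclusion is immediate. The one point requiring care is bookkeeping on the number of conjugates, namely checking that it is at least two (guaranteed once $n\ge 2$) so that the definition of generalized torsion genuinely applies; note also that this argument needs no separate appeal to torsion-freeness, since the degenerate cases $x=1$ or $y=1$ are handled by the same identity.
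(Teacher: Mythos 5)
Your proof is correct. The identity $(gy)^n=\bigl(\prod_{i=0}^{n-1}y^igy^{-i}\bigr)y^n$ follows by the induction you indicate, each factor $y^igy^{-i}=g^{y^{-i}}$ is indeed a conjugate of $g=xy^{-1}$ in the paper's convention $g^a=a^{-1}ga$, and cancelling $y^n$ turns $x^n=y^n$ into a product of $n\ge 2$ conjugates of $g$ equal to the identity, which is exactly a generalized torsion relation when $g\ne 1$. The paper reaches the same contradiction by a slightly different route: it observes $[x,y^n]=1$ and invokes the commutator identity (cited from Naylor--Rolfsen) expressing $[x,y^n]$ as a product of conjugates of $[x,y]$, so that $[x,y]$ would be the generalized torsion element; it must then treat the residual case $[x,y]=1$ separately, where $(xy^{-1})^n=1$ forces $x=y$ by torsion-freeness of $R^*$--groups. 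Your version replaces the quoted commutator identity with a direct collection computation on $xy^{-1}$ itself, which buys you a single uniform case with no separate appeal to torsion-freeness; the paper's version buys brevity by outsourcing the algebra to a standard identity. Both are sound, and your bookkeeping point (that the number of conjugates is $n\ge 2$, so the definition of generalized torsion genuinely applies) is the only delicate spot and you handle it correctly.
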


\begin{proof}
Suppose that $x^n=y^n$ for some $n\ne 0$.
Then $[x,y^n]=1$.
The commutator identity implies that
$[x,y^n]$ is a product of conjugates of $[x,y]$ (see \cite{NR}).
Hence if $[x,y]\ne 1$, then $[x,y]$ is a generalized torsion element, a contradiction.

If $[x,y]=1$, then $x^n y^{-n}=1$ implies $(xy^{-1})^n=1$.
Since $G$ is torsion-free, $x=y$.
\end{proof}

\begin{lemma}\label{lem:JS-bs}
Let $G(K)$ be a knot group.
For $x, y\in G(K)$ and $m, n \in \mathbb{Z}$,
the Baumslag--Solitar equation 
$x^{-1}y^mx=y^n$ implies that $y=1$ or $m=\pm n$.
\end{lemma}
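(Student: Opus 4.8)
The plan is to read the equation through the torus decomposition of the exterior $E(K)$ and the induced action of $G(K)$ on the associated Bass--Serre tree $T$, whose edge stabilizers are the peripheral $\mathbb{Z}^2$'s of the decomposing tori and whose vertex stabilizers are the geometric piece groups (Seifert fibered or hyperbolic); see \cite{JS}. First I would clear away the degenerate cases. Since $G(K)$ is torsion-free, a nontrivial $y$ has infinite order, and if either exponent vanishes the relation collapses to $m=n=0$; so I assume $y\neq 1$ and $m,n\neq 0$ and aim to prove $|m|=|n|$. As a warm-up that already disposes of most elements, abelianizing via the linking-number homomorphism $\phi\colon G(K)\to\mathbb{Z}$ turns $x^{-1}y^mx=y^n$ into $m\,\phi(y)=n\,\phi(y)$, so $m=n$ whenever $\phi(y)\neq 0$. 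The substance therefore concerns $y$ in the commutator subgroup, and I would handle it geometrically rather than homologically.

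The engine is the translation length $\tau(\cdot)$ on $T$, which is a conjugacy invariant satisfying $\tau(y^k)=|k|\,\tau(y)$ for tree-hyperbolic $y$. If $y$ acts hyperbolically on $T$ with $\tau(y)=\ell>0$, then from $y^n=x^{-1}y^mx$ I get $|n|\ell=\tau(y^n)=\tau(y^m)=|m|\ell$, hence $|m|=|n|$ immediately. It then remains to treat the case that $y$ is elliptic, where the relation should localize into one vertex group $G_0$. In a hyperbolic piece, $G_0$ is a torsion-free lattice in $\mathrm{PSL}(2,\mathbb{C})$ and $y$ is loxodromic or parabolic: if $y$ is loxodromic with dilation $|\lambda|>1$, equality of the larger eigenvalue-modulus of the conjugate elements $y^m$ and $y^n$ gives $|\lambda|^{|m|}=|\lambda|^{|n|}$, so $|m|=|n|$; if $y$ is parabolic, all its powers fix the same point $p$ at infinity, whence $x$ fixes $p$, so $x$ and $y$ lie together in the abelian cusp subgroup and $m=n$. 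In a Seifert piece (a torus knot space or a cable space) with central regular fiber $h$ and base orbifold group $\overline{G_0}$ a free product of cyclic groups, I would project to $\overline{G_0}$: if $\overline{y}=1$ then $y\in\langle h\rangle$ is central and $m=n$; if $\overline{y}$ is hyperbolic on the base tree the translation-length argument again yields $|m|=|n|$; and if $\overline{y}$ is elliptic, malnormality of the cyclic free factors forces $\overline{x}$ to centralize $\overline{y}$, hence $x$ into the abelian subgroup $\langle y,h\rangle$, giving $m=n$.

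The main obstacle is exactly the localization step for elliptic $y$: one must ensure the equation genuinely takes place inside a vertex group. Fixing a vertex $v$ with $y\in G_v$, the trouble is that $x$ need not fix $v$; when $xv\neq v$ the element $y^n=x^{-1}y^mx$ lies in $G_v\cap G_{xv}$ and is thus trapped in an edge group $\cong\mathbb{Z}^2$. To close this I would invoke two structural features of the torus decomposition of a knot exterior: that the peripheral $\mathbb{Z}^2$ subgroups are root-closed, so $y^n\in\mathbb{Z}^2$ already forces $y\in\mathbb{Z}^2$, and that the tree action is acylindrical enough that conjugate elements of a single edge group coincide. Granting these, $y^m$ and $y^n$ become equal elements of an abelian edge group and $m=n$ follows. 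Verifying this trapping-and-malnormality step cleanly, and checking it uniformly across the Seifert--Seifert, Seifert--hyperbolic, and hyperbolic adjacencies, is where the real care is required; by contrast the translation-length comparison and the per-piece computations above are routine.
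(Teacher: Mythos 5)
Your outline is an attempt to reprove the cited result from scratch: the paper itself gives no argument for this lemma but simply quotes \cite[Theorem VI.2.1]{JS} (or \cite{Sha}). The skeleton you propose --- abelianize to dispose of $y$ with nonzero linking number, use the translation-length homomorphism on the JSJ Bass--Serre tree for tree-hyperbolic $y$, and then analyze elliptic $y$ piece by piece --- is indeed the standard architecture of those proofs, and the easy halves (the translation-length comparison, the loxodromic/parabolic dichotomy in a hyperbolic piece) are fine.

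However, the step you correctly identify as the crux is closed by a false claim. The edge groups of the torus decomposition of $E(K)$ are \emph{not} root-closed in $G(K)$: in a torus knot space $\langle a,b\mid a^p=b^q\rangle$ or a cable space $\langle a,b,c\mid [b,c]=1,\ b^qc^p=a^p\rangle$, the regular fiber $h=a^p$ lies in the boundary $\mathbb{Z}^2$ while its root $a$ does not, so $y^n\in E$ does not force $y\in E$. This is not a removable technicality; roots of the Seifert fiber are exactly the mechanism behind the failure of the $R$--property for torus knot and cable pieces (Theorem~\ref{thm:main}) and behind the order-two generalized torsion elements $[a^r,b]$ of Example~\ref{ex:g2}, i.e.\ they are the phenomenon this whole circle of ideas is about. (Restricting to $y\in\ker\phi$ might exclude fiber roots, since $\phi(h)\neq 0$, but that would itself require proof.) Your second ``structural feature,'' that conjugate elements of a single edge group coincide, is likewise asserted rather than verified, and controlling such conjugations is precisely where \cite{Sha} and \cite[Ch.~VI]{JS} do their real work. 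There are also smaller gaps in the Seifert-piece analysis: composing spaces are omitted from the list of vertex groups, and ``$\overline{x}$ centralizes $\overline{y}$ hence $m=n$'' only gives $x^{-1}yx=yh^k$ a priori, so one still needs $[h]\neq 0$ in $H_1$ of the piece to force $k=0$. As it stands the proposal reduces the lemma to its known hard core and then dismisses that core with an incorrect statement; the honest options are to do the Jaco--Shalen/Shalen analysis in full or, as the paper does, to cite it.
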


\begin{proof}
This immediately follows from \cite[Theorem VI.2.1]{JS} or \cite{Sha}.
\end{proof}

\begin{lemma}\label{lem:R*Rbar}
Let $G(K)$ be a knot group.
If $G(K)$ is not an $\bar{R}$--group, then
either $G(K)$ is not an $R$--group, or
$G(K)$ contains a generalized torsion element of order two.
In particular, 
if $G(K)$ is an $R^*$--group, then $G(K)$ is an $\bar{R}$--group.
\end{lemma}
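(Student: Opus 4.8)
The plan is to argue through the Baumslag--Solitar characterization of $\bar{R}$--groups recalled at the start of this section. Suppose $G(K)$ is not an $\bar{R}$--group. By that characterization, this failure happens precisely when either $G(K)$ is not an $R$--group, or else $G(K)$ \emph{is} an $R$--group but there exist $x,y\in G(K)$ with $y\ne 1$ and integers $m,n$ satisfying the Baumslag--Solitar relation $x^{-1}y^mx=y^n$ with $m\ne n$. In the first case the desired conclusion holds immediately, so I would concentrate on the second.

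In that second case, I would first invoke Lemma~\ref{lem:JS-bs}, which forces $m=\pm n$ whenever $y\ne 1$. Combined with the standing assumption $m\ne n$, this leaves only the possibility $m=-n$; note in particular that $m\ne 0$, since $m=0$ would give $n=0=m$. Setting $z=y^m$, the relation rewrites as $x^{-1}zx=z^{-1}$, that is, $z^x=z^{-1}$.

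The key observation is then that $z^x z = z^{-1}z = 1$, which exhibits the identity as a product of two conjugates of $z$. Since $y\ne 1$ and $m\ne 0$, the torsion-freeness of $G(K)$ gives $z=y^m\ne 1$, so $z$ is a genuine generalized torsion element. As a non-trivial element its order is at least two, while the relation $z^x z=1$ shows its order is at most two; hence $z$ has order exactly two, which establishes the first assertion. For the final claim, I would suppose $G(K)$ is an $R^*$--group. Then $G(K)$ is an $R$--group by Lemma~\ref{lem:R*R}, and by definition it carries no generalized torsion, in particular none of order two. Were $G(K)$ not an $\bar{R}$--group, the first part of the lemma would supply one of these two forbidden features, a contradiction; hence $G(K)$ is an $\bar{R}$--group.

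The argument is essentially a clean chain of deductions, so there is no single hard technical step; the only point requiring genuine care is the passage from the $m=-n$ case to the length-two relation $z^xz=1$, together with the verification that $z\ne 1$, which is exactly what guarantees that the resulting generalized torsion element has order precisely two rather than one.
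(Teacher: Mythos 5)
Your proposal is correct and follows essentially the same route as the paper's proof: the Fay--Walls characterization via the Baumslag--Solitar relation, Lemma~\ref{lem:JS-bs} to force $m=-n$, the element $y^m\ne 1$ as the order-two generalized torsion element, and Lemma~\ref{lem:R*R} for the final claim. The only difference is that you spell out a couple of small verifications (that $m\ne 0$ and that the order is exactly two) which the paper leaves implicit.
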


\begin{proof}
Assume that $G(K)$ is an $R$--group.
Since $G(K)$ is not an $\bar{R}$--group,
there exist $x$ and $y\ne 1$, and integers $m\ne n$ such that
$x^{-1}y^mx=y^n$.
By Lemma \ref{lem:JS-bs}, $m=-n$.
Hence $(x^{-1}y^mx) y^m=1$.
Since $m\ne 0$ and $G(K)$ is torsion-free, 
$y^m\ (\ne 1)$ gives a generalized torsion element of order two.

Assume that $G(K)$ is an $R^*$--group.
Then it is an $R$--group (by Lemma \ref{lem:R*R}) without generalized torsion (by definition).
Thus the first assertion shows that $G(K)$ is an $\bar{R}$--group.
\end{proof}

Among knot groups, there is a huge difference between $R^*$--groups and $\bar{R}$--groups
as claimed in Theorem \ref{thm:R*Rbar}.

\begin{proof}[Proof of Theorem \ref{thm:R*Rbar}]
By Theorems \ref{thm:main} and \ref{thm:RRbar}, we know that
the knot group of a hyperbolic knot is an $\bar{R}$--group.
On the other hand, there are plenty examples of hyperbolic knots
whose knot groups admit generalized torsion, such as
the negative twist knots and twisted torus knots (see \cite{HT, MT2,T}).
\end{proof}

\section{Generalized torsion elements}\label{sec:g-torsion}

In this paper, a generalized torsion element of order two plays a key role.
Let $g$ be a generalized torsion element in a group $G$.
If $g$ has order two, then there exist $a$ and $b$ such that $g^ag^b=1$.
By taking a conjugation with $a^{-1}$, we have $gg^{ba^{-1}}=1$, so $g^{ba^{-1}}=g^{-1}$.
In other words, $g$ is conjugate to its inverse, and conversely,
such a non-trivial element gives a generalized torsion element of order two.

We first give two examples for later use.

\begin{example}
\label{ex:g2}
\begin{itemize}
\item[(1)]
Let $E(K)$ be a torus knot space of even type.
That is,  $K$  is a torus knot  $T(p,q)$ with $p$ even.
Then the knot group $G(K)$ has a presentation $\langle a, b \mid a^p=b^q\rangle$.
Let $p=2r$.
Then $[a^p,b]=[a^{2r},b]=1$, but
\[
[a^{2r},b]=[a^r,b]^{a^r}[a^r,b].
\]
We claim $[a^r,b]\ne 1$ in $G(K)$.
Let $\phi\colon G(K)\to \langle a,b\mid a^p=b^q=1\rangle=\mathbb{Z}_p*\mathbb{Z}_q$
be the natural projection.
Then $\phi([a^r,b])=a^{r}b^{-1}a^rb$ is reduced, so nontrivial (see \cite{LS}).

Thus we have shown that $[a^r,b]$ is a generalized torsion element of order two.
\item[(2)]
Let $C(p,q)$ be a cable space of even type.
It is the exterior of $T(p,q)$ in a solid torus  $S^1\times D^2$ with $p=2r$,
where $T(p,q)\subset S^1\times \partial D_0$, $D_0\subset D^2$.
Then $G=\pi_1(C(p,q))$ has a presentation $\langle a,b,c \mid [b,c]=1, b^qc^p=a^p\rangle$.
We choose these generators so that $a$ represents the core of $S^1\times D^2$,
and $b$ and $c$ lie on $S^1\times \partial D^2$ with $b=\{*\}\times \partial D^2$, $c=S^1\times \{*\}$.

Again, $[a^p,b]=1$.
We show $[a^r,b]\ne 1$ in $G$.
Consider the natural projection
\begin{align*}
\phi\colon G&\to \langle a,b,c \mid [b,c]=1, b^qc^p=a^p=1\rangle\\
&=\langle a \mid a^p=1\rangle *
\langle b,c \mid [b,c]=1, b^qc^p=1\rangle\\
&=\mathbb{Z}_p * \mathbb{Z}.
\end{align*}
The image corresponds to the fundamental group of the annulus with one cone point of index $p$.
Then $\phi([a^r,b])= a^{-r}b^{-1}a^rb$ is reduced, which is non-trivial.
Hence as in (1), $[a^r,b]$ is a generalized torsion element of order two.
\end{itemize}
\end{example}

In the next section, we prove
that a torus knot space or a cable space contains
a generalized torsion element of order two in its fundamental group if and only if it is of even type (Lemma \ref{lem:tc}),
by evaluating the stable commutator length of an element in the commutator subgroup.

The next lemma claims that if the knot group admits a generalized torsion element of order two, 
then there exists an essential (singular) map from the Klein bottle into the knot exterior.

\begin{lemma}\label{lem:kbmap}
Let $K$ be a knot with exterior $E(K)$ and $G(K)=\pi_1(E(K))$.
If $G(K)$ admits a generalized torsion element of order two,
then there exists a singular map $f\colon F\to E(K)$, where $F$ is the Klein bottle, such that
the induced homomorphism $f_*\colon \pi_1(F)\to G(K)$ is injective.
\end{lemma}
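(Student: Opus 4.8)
The plan is to convert the algebraic hypothesis---that $G(K)$ contains a generalized torsion element $g$ of order two---into a geometric statement about maps of surfaces, using the characterization of order-two elements recorded at the start of Section~\ref{sec:g-torsion}: such $g$ is conjugate to its own inverse. Concretely, there exists $h \in G(K)$ with $h^{-1}gh = g^{-1}$, equivalently $g^h g = 1$. The key observation is that this relation is exactly the defining relation of the Klein bottle group $\pi_1(F) = \langle x, y \mid y^{-1}xy = x^{-1}\rangle$. So I would first define a homomorphism $\varphi\colon \pi_1(F) \to G(K)$ by sending $x \mapsto g$ and $y \mapsto h$; the relation in $\pi_1(F)$ is respected precisely because $h^{-1}gh = g^{-1}$ holds in $G(K)$, so $\varphi$ is well defined. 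Since $E(K)$ is aspherical (the knot exterior is a $K(G(K),1)$), this homomorphism is realized, up to homotopy, by a continuous map $f\colon F \to E(K)$ with $f_* = \varphi$.

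The substantive point is to upgrade $\varphi$ from merely being a homomorphism to being \emph{injective}, which is what the lemma demands. Here I would exploit the structure of $\pi_1(F)$. The Klein bottle group is a semidirect product $\mathbb{Z} \rtimes \mathbb{Z}$, and it fits into a short exact sequence $1 \to \langle x \rangle \to \pi_1(F) \to \langle \bar{y}\rangle \to 1$ with both kernel and quotient infinite cyclic. To show $\varphi$ is injective, I would check that it is injective on the normal cyclic subgroup $\langle x \rangle$ and that the induced map on the quotient is injective as well. Injectivity on $\langle x \rangle$ amounts to showing $\varphi(x) = g$ has infinite order, which is automatic because $G(K)$ is torsion-free. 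For the quotient and the full injectivity I would argue that no nontrivial word in $\pi_1(F)$ can map to the identity: a general element can be written as $x^a y^b$, and if $\varphi(x^a y^b) = g^a h^b = 1$ one must derive $a = b = 0$.

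The main obstacle I anticipate is precisely ruling out such relations $g^a h^b = 1$ with $(a,b) \ne (0,0)$ in $G(K)$. If $b = 0$ this reduces to $g^a = 1$, excluded by torsion-freeness. If $b \ne 0$ but $a = 0$ we get $h^b = 1$, again excluded. The genuinely delicate case is $a \ne 0$ and $b \ne 0$, where $g$ and $h$ would satisfy an unexpected relation. I expect this is controlled by the Baumslag--Solitar analysis already available to us: the conjugation relation $h^{-1} g^a h = g^{-a}$ together with any further relation $g^a = h^{-b}$ would force, via Lemma~\ref{lem:JS-bs}, strong constraints on the subgroup $\langle g, h\rangle$, and one should be able to conclude that the only possibility is the trivial word. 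If the clean semidirect-product argument does not immediately close this case, the fallback is to choose $g$ and $h$ more carefully---for instance replacing $h$ by a representative whose image in the abelianization or in a suitable quotient is manifestly independent of $g$---so that the subgroup $\langle g,h\rangle$ visibly maps onto a Klein bottle group. In either approach the hard technical content is verifying that $\langle g, h\rangle$ really is a faithful copy of $\pi_1(F)$ rather than a proper quotient of it.
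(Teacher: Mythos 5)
Your overall strategy is exactly the paper's: realize the relation $h^{-1}gh=g^{-1}$ as the Klein bottle relation, build the map $f$ (the paper extends over the $2$--cell directly since the relator maps to a null-homotopic loop, rather than citing asphericity, but both are fine), and then check injectivity on the normal form $x^a y^b$. The cases $a=0$ or $b=0$ are handled correctly (modulo noting that $h\ne 1$, which follows since $h=1$ would give $g^2=1$). However, at the decisive case $a\ne 0$, $b\ne 0$ you do not actually give an argument: you say you ``expect'' it is controlled by Lemma~\ref{lem:JS-bs} and offer a fallback of re-choosing $g$ and $h$. That is a genuine gap, and moreover Lemma~\ref{lem:JS-bs} cannot close it: that lemma only constrains a Baumslag--Solitar relation $x^{-1}y^mx=y^n$ to have $m=\pm n$, and here you already know the relation is $h^{-1}g^ah=g^{-a}$, so it yields no new information.

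The missing step is elementary and is what the paper does. If $g^ah^b=1$ with $a,b\ne 0$, then $g^a=h^{-b}$ is a power of $h$ and hence commutes with $h$, so $h^{-1}g^ah=g^a$. On the other hand $h^{-1}gh=g^{-1}$ gives $h^{-1}g^ah=g^{-a}$. Therefore $g^a=g^{-a}$, i.e.\ $g^{2a}=1$, contradicting torsion-freeness of the knot group. With this one line inserted, your proof coincides with the paper's; without it, the proof is incomplete and the tool you propose to finish it would not suffice.
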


\begin{proof}
Let $y$ be a generalized torsion element of order two in $G(K)$.
Then there exists $x$ such that  $x^{-1}yx=y^{-1}$. Since $y\ne 1$ and $G(K)$ is torsion-free, we have $x\ne 1$.
We also use the same symbols $x$ and $y$ to denote the loops with the base point $p_0$.

For the Klein bottle $F$, take two loops $a$ and $b$ meeting in a single point $q_0$ so that
$a$ is orientation-reversing but $b$ orientation-preserving.
Then they give a presentation $\pi_1(F)=\langle a,b\mid a^{-1}ba=b^{-1}\rangle$ based on the point $q_0$.
Let $f$ be a map sending $q_0$, $a$ and $b$ to  $p_0$, $x$ and $y$, respectively.
Since the image  $x^{-1}yxy$ of the loop $a^{-1}bab$ is null-homotopic in $E(K)$,
$f$ extends to a map on $F$.

We claim that
the induced homomorphism $f_*\colon \pi_1(F)\to G(K)$ is injective.
Note that any element of $\pi_1(F)$ is written as $a^ib^j$ for some integers $i$ and $j$.
Assume that $f_*$ is not injective.
Then there exists a non-trivial element $a^ib^j$ such that $f_*(a^ib^j)=x^iy^j=1$.
Since $x$ and $y$ are not torsions, $i\ne 0$ and $j\ne 0$.

On the other hand, the relation $x^{-1}yx=y^{-1}$ gives
$x^{-1}y^jx=y^{-j}$.
Since $y^j=x^{-i}$, we have $y^j=y^{-j}$, so $y^{2j}=1$.
This is impossible.
\end{proof}

For a Haken $3$--manifold  with incompressible boundary,
there exists the characteristic submanifold  $V$ by Jaco--Shalen \cite{JS} and Johannson \cite{J}.
We restrict ourselves to the exterior of a non-trivial knot.
Then $V$ is a disjoint union of Seifert fibered manifolds.
More precisely, each component is either a torus knot space, a cable space, a composing space  or $\mathrm{(torus)}\times I$.
There is a slight difference between two theories of \cite{JS} and \cite{J}.
For the knot exterior $E(K)$ of a hyperbolic knot $K$,
$V$ is empty in \cite{JS}, but  $V=\partial E(K)\times I$ in \cite{J}.


\begin{proposition}
\label{prop:g-torsion2}
If $G(K)$ admits a generalized torsion element of order two,
then
$E(K)$ contains either a torus knot space of even type or
a cable space of even type as a component of the characteristic submanifold.
\end{proposition}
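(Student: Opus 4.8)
The plan is to feed the essential singular Klein bottle produced by Lemma~\ref{lem:kbmap} into the characteristic submanifold machinery and then read off the Seifert piece that captures it. Given a generalized torsion element $y$ of order two together with $x$ satisfying $x^{-1}yx=y^{-1}$, Lemma~\ref{lem:kbmap} furnishes a map $f\colon F\to E(K)$ of the Klein bottle whose induced homomorphism $f_*\colon\pi_1(F)\to G(K)$ is injective with image $\langle x,y\rangle$. First I would verify that $f$ is essential in the sense required by the enclosing theorem: it cannot be homotopic into $\partial E(K)$, since that boundary is a torus with abelian fundamental group, whereas $\langle x,y\rangle$ is non-abelian and so cannot be conjugated into the peripheral subgroup.

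Next I would invoke the enclosing property of the characteristic submanifold $V$ due to Jaco--Shalen \cite{JS} and Johannson \cite{J}: an essential singular torus or Klein bottle is homotopic into $V$. Since $F$ is connected, the homotoped map lands in a single component $W$ of $V$, and after conjugation we may assume $\langle x,y\rangle\le\pi_1(W)$ with the relation $x^{-1}yx=y^{-1}$ still holding in $\pi_1(W)$. Because $W$ is Seifert fibered with non-empty boundary, $\pi_1(W)$ is torsion-free, so $y$ is itself a generalized torsion element of order two inside $\pi_1(W)$. By the description of $V$ for a knot exterior recalled above, $W$ is a torus knot space, a cable space, a composing space, or $(\text{torus})\times I$.

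It then remains to identify which of these four occurs. The cases $(\text{torus})\times I$ and composing space are discarded at once: their fundamental groups are $\mathbb{Z}^2$ and $F_k\times\mathbb{Z}$ (with $F_k$ free), respectively, both bi-orderable and hence $R^*$--groups, so they admit no generalized torsion at all; equivalently, the Klein bottle group is not bi-orderable and cannot embed into a bi-orderable group. Hence $W$ is a torus knot space or a cable space. To pin down the even type I would appeal to the stable-commutator-length computation carried out in the next section (Lemma~\ref{lem:tc}), which shows that such a piece carries a generalized torsion element of order two in its fundamental group exactly when it is of even type. Combining these two steps yields that $W$ is a torus knot space of even type or a cable space of even type, as claimed.

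The step I expect to be most delicate is the enclosing itself for the non-orientable surface $F$, and in particular guaranteeing that the whole Klein bottle---rather than merely its orientation double cover, the torus carrying $\langle x^2,y\rangle$---is pushed into one Seifert component. If one prefers to enclose only this $\mathbb{Z}^2$ by the ordinary torus theorem, one must then argue separately that the extra generator $x$, which normalizes $\langle x^2,y\rangle$ since $x^{-1}yx=y^{-1}\in\langle x^2,y\rangle$, lands in the same piece; controlling this normalizer is precisely what the Klein bottle version of the enclosing theorem provides. The even-versus-odd dichotomy is the other substantive ingredient, but it is cleanly isolated in the stable-commutator-length argument of Lemma~\ref{lem:tc}.
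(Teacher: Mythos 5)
Your proposal is correct and follows essentially the same route as the paper: push the injective singular Klein bottle from Lemma~\ref{lem:kbmap} into the characteristic submanifold via Johannson's enclosing theorem (Corollary 13.2 of \cite{J}), rule out the composing space and $(\text{torus})\times I$ pieces by bi-orderability, and invoke Lemma~\ref{lem:tc} to force even type. Your extra remarks on essentiality and on why the full Klein bottle (not just its orientation double cover) must be enclosed are sensible elaborations of details the paper leaves implicit, but they do not change the argument.
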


\begin{proof}
By Lemma \ref{lem:kbmap}, there exists a map $f\colon F\to E(K)$, where $F$ is the Klein bottle, such
that $f_*$ is injective.
Then Corollary 13.2 of \cite{J} claims that 
$f$ is homotopic to a map $g$ with the image contained in the characteristic submanifold $V$ of $E(K)$.

Let $S$ be the component of $V$ which contains the image of $g$.
Then $\pi_1(S)$ admits a generalized torsion element of order two, because $\pi_1(F)$ contains such an element
and $g_*$ is injective.
There are only four possibilities of $S$: a torus knot space, a cable space, a composing space or $\mathrm{(torus)}\times I$.
However, a composing space and $\mathrm{(torus)}\times I$ have bi-orderable fundamental groups.
Hence there is no generalized torsion there.
Also, Lemma \ref{lem:tc} shows that
if a torus knot space or a cable space admits a generalized torsion element of order two, then
it is of even type.
Thus  $V$ contains a torus knot space of even type or a cable space of even type as a component.
\end{proof}

\begin{proof}[Proof of Theorem \ref{thm:RRbar}]
By Lemma \ref{Rbar_R}, any $\bar{R}$--group is an $R$--group.
We prove the converse for a knot group.

Suppose that $G(K)$ is an $R$--group.
By Theorem \ref{thm:main},
$E(K)$ contains neither a torus knot space nor a cable space as a component of the characteristic submanifold.

Assume that $G(K)$ is not an $\bar{R}$--group for a contradiction.
By Lemma \ref{lem:R*Rbar},  $G(K)$ admits a generalized torsion element of order two.
Then Proposition \ref{prop:g-torsion2} immediately gives a contradiction.
\end{proof}

\medskip

\begin{proof}[Proof of Theorem \ref{thm:g-torsion_order_2}]
Let us observe the ``if part''. 
Let $X$ be a decomposing piece of $E(K)$, which may be $E(K)$ itself. 
Assume that $X$ is either a torus knot space of even type or a cable space of even type. 
Then as shown in Example~\ref{ex:g2}, 
$\pi_1(X)$ has a generalized torsion element of order two. 
Since $\pi_1(X)$ is a subgroup of $G(K)$, 
$G(K)$ also has a generalized torsion element of order two. 

The ``only if'' part of Theorem~\ref{thm:g-torsion_order_2} follows from Proposition~\ref{prop:g-torsion2}. 
\end{proof}

\section{Stable commutator length}\label{sec:scl}

We quickly review the definition of stable commutator length (\cite{C}).

Let $G$ be a group and $g\in [G,G]$.
Then the \textit{stable commutator length\/} of $g$ is defined to be
\[
\mathrm{scl}_G(g)=\lim_{n\to \infty}\frac{\mathrm{cl}_G(g^n)}{n},
\]
where $\mathrm{cl}_G(a)$ denotes the commutator length of $a$, that is,
the smallest number of commutators whose product gives $a$.
For $g\not\in [G,G]$, $\mathrm{scl}_G(g)$ can be defined to be
$\mathrm{scl}_G(g^k)/k$ if $g^k\in [G,G]$, or $\infty$, otherwise.

For a knot group $G(K)$, any generalized torsion element lies in $[G(K),G(K)]$.
For, the equation (\ref{eq:g-torsion}) implies $n[g]=0\in H_1(E(K))=\mathbb{Z}$ under the abelianization, so $[g]=0$.

\begin{lemma}[\cite{IMT0}]\label{lem:imt}
Let $G$ be a group.
If $g$ is a generalized torsion element of order $k$, then 
\[
\mathrm{scl}_G(g)\le \frac{1}{2}-\frac{1}{k}.
\]
In particular, if $g$ has order two, then $\mathrm{scl}_G(g)=0$.
\end{lemma}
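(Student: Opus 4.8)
The plan is to prove the inequality through the topological (surface) characterization of stable commutator length due to Calegari \cite{C}, rather than through the algebraic definition recalled just above the statement; the latter, combined only with subadditivity of commutator length, does not produce the sharp constant $\frac12-\frac1k$, so passing to admissible surfaces appears essential. Recall that $g$ being a generalized torsion element of order $k$ means, by \eqref{eq:g-torsion}, that there exist $a_1,\dots,a_k\in G$ with $g^{a_1}g^{a_2}\cdots g^{a_k}=1$. Abelianizing this relation shows that the class of $g$ is torsion in $H_1$, so $\mathrm{scl}_G(g)$ is defined and finite, and only the existence of such a relation with $k$ conjugates (not the minimality of $k$) is needed for the upper bound.

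The main step is to manufacture one explicit admissible surface of degree $k$ from this relation. First I would take $P$ to be a $k$--holed sphere, that is, a sphere with $k$ open disks removed. Its fundamental group is free of rank $k-1$, and if $\delta_1,\dots,\delta_k$ denote the boundary loops carrying the orientation induced from a fixed orientation of $P$, then they satisfy the single relation $\delta_1\delta_2\cdots\delta_k=1$. Hence the assignment $\delta_i\mapsto g^{a_i}$ defines a homomorphism $\rho\colon \pi_1(P)\to G$, since $\rho(\delta_1\cdots\delta_k)=g^{a_1}\cdots g^{a_k}=1$. As $P$ is aspherical, $\rho$ is realized by a map $f\colon P\to X$ into a $K(G,1)$ space $X$. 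Each boundary component maps to the conjugate $g^{a_i}$, which is freely homotopic to $g$ as an oriented loop; therefore $f$ is an admissible surface for $g$ whose boundary covers $g$ with total degree $n=k$.

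It then remains to compute the complexity of $P$. One has $\chi(P)=2-k$, so for $k\ge 3$ the surface $P$ is connected with negative Euler characteristic and $-\chi^-(P)=k-2$, while for $k=2$ it is an annulus with $\chi(P)=0$ and $-\chi^-(P)=0$. Consequently
\[
\mathrm{scl}_G(g)\le \frac{-\chi^-(P)}{2n}=\frac{\max\{k-2,0\}}{2k}=\frac12-\frac1k ,
\]
where for $k=2$ the right-hand side is $0$ and, since stable commutator length is always non-negative, this forces $\mathrm{scl}_G(g)=0$. For $k=2$ one may argue independently via Bavard duality: order two means $g$ is conjugate to $g^{-1}$, so every homogeneous quasimorphism, being odd and conjugation invariant, vanishes on $g$, whence $\mathrm{scl}_G(g)=0$.

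The point requiring the most care is the choice of framework together with the degree bookkeeping. The algebraic definition of $\mathrm{scl}$ recalled in this section, used with subadditivity of commutator length alone, yields only weaker estimates; to reach the sharp constant the argument must be run through the topological definition and the fundamental inequality $\mathrm{scl}_G(g)\le -\chi^-(S)/(2\,n(S))$, valid for every admissible surface $S$ \cite{C}. Granting this, the one thing to check carefully is that the boundary relation of the $k$--holed sphere matches \eqref{eq:g-torsion} with all $k$ boundary loops mapping to $g$ with the \emph{same} orientation, so that the degree is exactly $k$ with no cancellation; this is automatic since each $g^{a_i}$ is literally a conjugate of $g$. Once the surface and its degree are correctly set up, the Euler characteristic estimate and the conclusion, including $\mathrm{scl}_G(g)=0$ when $k=2$, follow immediately.
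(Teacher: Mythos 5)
Your proof is correct. Note that the paper itself does not prove this lemma: it is quoted from \cite{IMT0}, and the only argument given in the text is the remark immediately following the statement, which handles the case $k=2$ exactly as in your last alternative --- $g$ conjugate to $g^{-1}$ forces every homogeneous quasimorphism to vanish on $g$, so $\mathrm{scl}_G(g)=0$ by Bavard duality \cite{Ba}. Your general argument --- building the $k$--holed sphere whose $i$-th boundary component carries $g^{a_i}$, observing it is admissible of degree $k$ since conjugates are freely homotopic, and invoking Calegari's formula $\mathrm{scl}_G(g)\le -\chi^-(S)/(2n(S))$ \cite{C} to get $(k-2)/(2k)=\tfrac12-\tfrac1k$ --- is precisely the argument of the cited source, and your bookkeeping (the homomorphism is defined on the free generators $\delta_1,\dots,\delta_{k-1}$ with $\delta_k$ forced by the relation, all boundary degrees are $+1$ so there is no cancellation, and $-\chi^-=\max\{k-2,0\}$) is accurate. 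You are also right that the purely algebraic route via subadditivity of commutator length does not yield the sharp constant, so passing to admissible surfaces is the correct move.
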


As mentioned in the first paragraph of Section \ref{sec:g-torsion}, an element $g$ is
a generalized torsion element of order two if and only if $g$ is conjugate to $g^{-1}$.
Hence, for any homogeneous quasimorphism $\phi \colon G \to \mathbb{R}$, $\phi(g) = \phi(g^{-1}) = -\phi(g)$, i.e. $\phi(g) = 0$.  
This also implies that $\mathrm{scl}_G(g)=0$ through Bavard's duality theorem \cite{Ba}.
Such an element often appears in the study of stable commutator length as an exceptional case.

\begin{lemma}\label{lem:tc}
A torus knot space or a cable space contains a generalized torsion element of order two in its fundamental group
if and only if it is of even type.
\end{lemma}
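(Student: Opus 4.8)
The plan is to dispatch the two directions separately. The ``if'' direction is already in hand: Example~\ref{ex:g2} produces the explicit generalized torsion element $[a^r,b]$ of order two whenever the piece is of even type, so only the ``only if'' direction needs work. By the discussion opening Section~\ref{sec:g-torsion}, an element has order two as a generalized torsion element exactly when it is nontrivial and conjugate to its own inverse. Thus it suffices to prove the contrapositive: if a torus knot space or a cable space is of \emph{odd} type, then no nontrivial element of its fundamental group is conjugate to its inverse.

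First I would pass to a free product by killing the Seifert fiber. For the torus knot space $G=\langle a,b\mid a^p=b^q\rangle$ the element $z=a^p=b^q$ is central of infinite order and $G/\langle z\rangle\cong \mathbb{Z}_p*\mathbb{Z}_q$; for the cable space $G=\langle a,b,c\mid [b,c]=1,\ b^qc^p=a^p\rangle$ the element $h=a^p=b^qc^p$ commutes with all three generators, is of infinite order, and $G/\langle h\rangle\cong \mathbb{Z}_p*\mathbb{Z}$, exactly the projections already used in Example~\ref{ex:g2}. In both cases odd type means that every free factor is either torsion-free or cyclic of odd order, so no free factor contains an element of order two. If $g$ were conjugate to $g^{-1}$ in $G$, then its image $\bar g$ would be conjugate to $\bar g^{-1}$ in the free product; and since the kernel $\langle z\rangle$ (resp. $\langle h\rangle$) is central and torsion-free, once I show $\bar g=1$ it follows that $g$ is central, hence $g=g^{-1}$, hence $g=1$. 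So everything reduces to the free product statement.

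Inside the free product I would argue $\bar g=1$ in two steps. Elements not conjugate into a factor are handled by stable commutator length: by Lemma~\ref{lem:imt} an order-two generalized torsion element has $\mathrm{scl}=0$, and scl does not increase under the quotient homomorphism, so $\mathrm{scl}_{\bar G}(\bar g)=0$; the spectral gap for scl in free products (see, e.g., \cite{C}) then forces $\bar g$ to be conjugate into a factor, since an element not conjugate into a factor has positive scl. An element conjugate into a factor is torsion there, and in a free product two elements of a single factor are conjugate if and only if they are conjugate within that factor (\cite{LS}); as the factors are abelian this means $\bar g$ and $\bar g^{-1}$ coincide inside the factor, so $\bar g^2=1$, whence $\bar g=1$ because the factor has no order-two element. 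This yields $\bar g=1$ and closes the reduction.

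I expect the delicate points to be, first, correctly identifying the kernel of each quotient as an infinite-cyclic \emph{central} subgroup (this is where the Seifert structure and the coprimality of $p,q$ enter, and it is what legitimizes the step ``$\bar g=1\Rightarrow g=1$''), and second, the free-product input. If one prefers to avoid quoting an scl gap, the same conclusion can be reached purely combinatorially: writing $\bar g$ as a cyclically reduced word $s_1\cdots s_k$, the equality of cyclic words of $\bar g$ and $\bar g^{-1}$ (\cite{LS}) forces a syllable-level reflection fixing some syllable $s$ with $s^2=1$, which is impossible when no factor has an element of order two. Either route isolates the same mechanism, and the parity hypothesis is used precisely to exclude an order-two syllable; this is exactly the point where even type would instead produce the element of Example~\ref{ex:g2}.
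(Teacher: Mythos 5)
Your proof is correct, and its skeleton --- reduce to the odd-type pieces, kill the central fiber subgroup so as to land in $\mathbb{Z}_p*\mathbb{Z}_q$ or $\mathbb{Z}_p*\mathbb{Z}$, and invoke Chen's scl spectral gap together with monotonicity of scl to rule out images that are not conjugate into a free factor --- is the same as the paper's. Where you genuinely diverge is the residual case in which the image is conjugate into a factor. The paper disposes of this by an explicit homology computation: it writes $g$ as $a^{i+pj}$, $b^{i+qj}$, or $b^{ri+qj}c^{si+pj}$, evaluates its class in $H_1(E(K))$ or $H_1(C(p,q))$, and uses the fact that a generalized torsion element is null-homologous to force $g=1$. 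You instead stay inside the free product: the conjugacy theorem for free products puts $\bar g$ and $\bar g^{-1}$ in the same abelian factor and conjugate there, hence equal, so $\bar g^2=1$ and $\bar g=1$ because no factor has $2$--torsion; centrality and torsion-freeness of the kernel then kill $g$ itself. This is cleaner, avoids the coordinate bookkeeping in $H_1$, and makes transparent that the only input from ``odd type'' is the absence of order-two elements in the factors; the paper's route, by contrast, needs no conjugacy theorem but is tied to the specific homology of these Seifert pieces. Your sketched combinatorial alternative (a syllable-level reflection of a cyclically reduced word must fix a syllable, which would then square to $1$) is also sound and would remove the dependence on scl entirely, which the paper does not attempt. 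Two small slips, neither fatal: the parenthetical ``an element conjugate into a factor is torsion there'' fails for the $\mathbb{Z}$ factor of the cable-space quotient (your argument never actually uses it); and the blanket claim that an element not conjugate into a factor has positive scl requires the no-$2$-torsion hypothesis you only make explicit at the end, since Chen's bound $\tfrac12-\tfrac1N$ degenerates to $0$ when some syllable has order two.
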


\begin{proof}
Example \ref{ex:g2} shows that a torus knot space or a cable space of even type
 contains a generalized torsion element of order two in its fundamental group.
 
 Conversely, consider a torus knot space of odd type.
 That is, let $K$ be a torus knot $T(p,q)$ with $p,q$ odd.
The knot group $G(K)$ has a presentation $\langle a,b \mid a^p=b^q\rangle$.
Let $g$ be a generalized torsion element of order $k\ (\ge 2)$ in $G(K)$.

\begin{claim}\label{cl:k2}
$k>2$.
\end{claim}

\begin{proof}[Proof of Claim \ref{cl:k2}]
Let $\phi\colon G(K) \to H=\langle a,b \mid a^p=b^q=1\rangle=\mathbb{Z}_p * \mathbb{Z}_q$ be the natural projection.
This map is induced by collapsing each fiber of the Seifert fibration to a point.
Equivalently, $\ker \phi$ is the center of $G(K)$, which is the infinite cyclic normal subgroup generated by a regular fiber $h\ (=a^p=b^q)$.

First, assume that $\phi(g)$ is not conjugate into one factor of $\mathbb{Z}_p*\mathbb{Z}_q$.
If $\phi(g)=a_1b_1\dots a_Lb_L$ with $a_i\in \langle a\rangle=\mathbb{Z}_p$, $b_i\in \langle b\rangle=\mathbb{Z}_q$,
$a_i\ne 1$, $b_i\ne 1$, and $L\ge 1$ (or, $\phi(g)=b_1a_1\dots b_La_L$),
then Theorem 3.1 of \cite{Ch} (or \cite[Theorem F]{CH})
claims that
\begin{equation}\label{eq:scl}
\mathrm{scl}_H(\phi(g))\ge \frac{1}{2}-\frac{1}{N},
\end{equation}
where
$N$ is the minimum order of $a_i, b_i$.
Since $p$ and $q$ are odd, $N\ge 3$.
Thus $\mathrm{scl}_H(\phi(g))>0$.
By the monotonicity of the stable commutator length (\cite[Lemma 2.4]{C}), 
we have $\mathrm{scl}_{G(K)}(g)\ge \mathrm{scl}_H(\phi(g))>0$.
Then $k>2$ by Lemma \ref{lem:imt}.

Otherwise, $\phi(g)$, after a conjugation if necessary, lies in one factor.
This implies that $\phi(g)=a^i$ or $b^i$, so
$g=a^ih^j$ or $b^ih^j$ for some integers $i, j$.
Then $g=a^{i+pj}$ or $b^{i+qj}$.
In $H_1(E(K))=\mathbb{Z}$, $[g]=(i+pj)[a]$ or $(i+qj)[b]$.
We recall that $[g]=0$, $[a]=q$ and $[b]=p$ in $H_1(E(K))$.
Thus $(i+pj)q=0$ or $(i+qj)p=0$, so $i+pj=0$ or $i+qj=0$.
However, this implies $g=1$, a contradiction.
\end{proof}

Next, let $C(p,q)$ be a cable space of odd type,
and let $G=\pi_1(C(p,q))=\langle a, b,c \mid [b,c]=1, b^qc^p=a^p\rangle$.
As above, consider the natural projection $\phi\colon G\to H=\langle a, b,c \mid [b,c]=1, b^qc^p=a^p=1\rangle
=\mathbb{Z}_p*\mathbb{Z}$ (see Example \ref{ex:g2}(2)).
Then $\ker \phi$ is the center of $G$, which is the infinite cyclic normal subgroup generated by a regular fiber.

Let $g$ be a generalized torsion element of order $k$ in $G$.
We can show that $k>2$ as in the proof of Claim \ref{cl:k2}.

If $\phi(g)$ has the cyclically reduced form of length at least two, then 
we still have the evaluation (\ref{eq:scl}), whereas $N$ is the minimum order of $a_i\in \mathbb{Z}_p$ (\cite{C,CH}).
Thus $k>2$ as above.

Suppose that $\phi(g)$ lies in one factor of $\mathbb{Z}_p*\mathbb{Z}$.
Let $d$ be a generator of the second factor $\langle b,c\mid [b,c]=1, b^qc^p=1\rangle=\mathbb{Z}$.
(Explicitly, take integers $r,s$ such that $pr -qs=1$, and then $d=b^rc^s$.)
Let $h\in G$ be the regular fiber, which is equal to $a^p\ (=b^qc^p)$.
Then $g=a^ih^j=a^{i+pj}$ or $d^ih^j=b^{ri+qj}c^{si+pj}$.

Note that $H_1(C(p,q))=\mathbb{Z}\oplus \mathbb{Z}$.
Let $\mu$ be the meridian of $T(p,q)$ in $S^1\times D^2$.
Then $[\mu]$ and $[c]$ generate $H_1(C(p,q))$, and $[b]=(p,0)$, $[c]=(0,1)$, and 
$[a]=(q,1)$ in $H_1(C(p,q))$ with suitable orientations.

If $g=a^{i+pj}$, then $[g]=((i+pj)q,i+pj)$.
Thus $i+pj=0$, so $g=1$, a contradiction.
If $g=b^{ri+qj}c^{si+pj}$, then $[g]=((ri+qj)p,si+pj)$.
Hence $ri+qj=si+pj=0$, which gives $g=1$ again.
\end{proof}

%
%
%
%
%
%
%



\begin{proof}[Proof of Corollary \ref{cor:hyp2}]
Let $g$ be a generalized torsion element of order two.
Assume that $g$ is not peripheral.
That is, it is not conjugate into the peripheral subgroup.
Then, by \cite{IMT}, there exists $\delta_g>0$ such that
\[
\mathrm{scl}_{\pi_1(K(r))}(p_r(g))\ge \delta_g\quad \textrm{whenever}\ p_r(g) \ne 1
\]
for any hyperbolic surgery $r$, where
$p_r\colon G(K)\to \pi_1(K(r))$ is the natural projection.
By the monotonicity of the stable commutator length (\cite{C}),
$\mathrm{scl}_{G(K)}(g)\ge \mathrm{scl}_{\pi_1(K(r))}(p_r(g))$,
so  we have $\mathrm{scl}_{G(K)}(g)>0$, contradicting Lemma \ref{lem:imt}.

Hence we assume that $g$ is peripheral.
Since a generalized torsion element lies in $[G(K),G(K)]$,
$g$ is conjugate to a power of the longitude $\lambda$.
We note that
\[
\mathrm{scl}_{G(K)}(\lambda)=g(K)-\frac{1}{2}\ge \frac{1}{2}
\]
by \cite[Proposition 4.4]{C}, where $g(K)$ denote the genus of $K$.
This implies that $\lambda$ is not a generalized torsion element,
neither is $g$.
\end{proof}

\begin{remark}
Corollary \ref{cor:hyp2} also follows from \cite[Lemma 8.13]{CH}.
It claims that the knot group $G(K)$ of a hyperbolic knot has a strong spectral gap relative to the peripheral subgroup,
and that the relative stable commutator length vanishes on $g$ if and only if $g$ is conjugate into the peripheral subgroup.
\end{remark}

\section*{acknowledgement}

We would like to thank Lvzhou Chen for helpful communication.

\bibliographystyle{alpha}

\end{document}